\numberwithin{equation}{section}
\numberwithin{figure}{section}
\newtheorem{thm}[equation]{Theorem}
\crefname{thm}{Theorem}{Theorems}
\Crefname{thm}{Theorem}{Theorems}
\newtheorem*{thm*}{Theorem}
\newtheorem{lemma}[equation]{Lemma}
\crefname{lemma}{Lemma}{Lemmas}
\Crefname{lemma}{Lemma}{Lemmas}
\theoremstyle{definition}
\crefname{definition}{Definition}{Definitions}
\Crefname{definition}{Definition}{Definitions}
\newtheorem{ex}[equation]{Example}
\crefname{ex}{Example}{Examples}
\Crefname{ex}{Example}{Examples}
\newtheorem{rmk}[equation]{Remark}
\crefname{rmk}{Remark}{Remarks}
\Crefname{rmk}{Remark}{Remarks}
\newcommand{\Z}{\mathbf{Z}}
\newcommand{\R}{\mathbf{R}}
\newcommand{\C}{\mathbf{C}}
\newcommand{\perm}[1]{[#1]}
\newcommand{\symgrp}[1]{S_{#1}}
\newcommand{\rothe}{D}
\newcommand{\rank}[2]{\operatorname{rank}_{#1}#2}
\newcommand{\graph}[1]{G^{#1}}
\newcommand{\T}{{\mathsf{T}}}
\newcommand{\TxT}{{\mathsf{T}} \times {\mathsf{T}}}
\newcommand{\sm}{\setminus}
\newcommand{\dmax}{d_{\max}}
\newcommand{\edgecone}[1]{\sigma_{#1}^{\vee}}
\newcommand{\comp}[1]{\mathcal{C}(#1)} 
\newcommand{\msv}[1]{\overline{X_{#1}}} 
\newcommand{\Y}[1]{Y_{#1}}
\newcommand{\defn}[1]{{\bfseries{#1}}}
\newcommand{\M}{{\mathsf{M}}}
\newcommand{\edgeset}[1]{E(#1)}
\newcommand{\vertexset}[1]{V(#1)}
\newcommand{\edge}[2]{(#1 \to #2)}
\newcommand{\B}{{\mathsf{B}}}
\DeclarePairedDelimiter\abs{\lvert}{\rvert}
\DeclareMathOperator{\Spec}{Spec}
\DeclareMathOperator{\cone}{Cone}
\DeclareMathOperator{\ess}{Ess}
\DeclareMathOperator{\nw}{NW}
\DeclareMathOperator{\dom}{dom}
\DeclareMathOperator{\length}{\ell}
\DeclareMathOperator{\Inv}{Inv}
\DeclareMathOperator{\diag}{diag}
\title{Complexity of the zero set of a matrix Schubert ideal}
\author{Laura Escobar\thanks{\href{mailto:lauraescobar@ucsc.edu}{lauraescobar@ucsc.edu}. Mathematics Department, University of California, Santa Cruz} \and Cesar J.\ Meza\thanks{\href{mailto:c.j.meza@wustl.edu}{c.j.meza@wustl.edu}. Department of Mathematics, Washington University in St.~Louis}}
\date{}
\begin{document}
\maketitle

\begin{abstract}
    $T$-varieties are normal varieties equipped with an action of an algebraic torus $T$. When the action is effective, the complexity of a $T$-variety $X$ is 
    $\dim(X)-\dim(T)$. Matrix Schubert varieties, introduced by Fulton in 1992, are $T$-varieties consisting of $n \times n$ matrices satisfying certain constraints on the ranks of their submatrices. 
    In this paper, we focus on the complexity of certain torus-fixed affine subvarieties of matrix Schubert varieties. Concretely, given a matrix Schubert variety $\msv{w}$ where $w\in \symgrp{n}$, we study the complexity of $Y_w$ obtained by the decomposition $\msv{w} = Y_{w} \times \C^{k}$ with $k$ as large as possible. Building on results by Escobar--M\'{e}sz\'{a}ros and Donten-Bury--Escobar--Portakal, we show that for a fixed $n$, the complexity of $Y_{w}$ with respect to this action can be any integer between $0$ and $(n-1)(n-3)$, except $1$.
\end{abstract}

\section{Introduction}

A $T$-variety is a normal variety $V$ equipped with an action of an algebraic torus $T$, and the complexity of $V$ is the difference between the dimension of $V$ and the dimension of a maximal $T$-orbit. This nonnegative integer provides information on the combinatorial tools that can be applied to understand the variety \cite{Altmann:2006aa,Altmann:2012aa}. A general guiding principle is that the lower the complexity, the more amenable the variety is to combinatorial methods.
For example, $T$-varieties of complexity $0$ are precisely toric varieties, which are completely described using polyhedral objects such as polytopes and cones \cite{Cox_Little_Schenck}.

Flag varieties come equipped with the action of a torus, and it is natural to study the complexity of their torus-invariant subvarieties. In particular, there has been work classifying Schubert and Richardson varieties of a given complexity, e.g., \cite{can2023toricrichardsonvarieties,lee2024torusorbitclosuresflag}. 
A related problem is to carry out such a classification in the case of matrix Schubert varieties. These are affine varieties, introduced by Fulton \cite{Fulton:1992aa}, consisting of matrices that satisfy certain rank conditions. 
These varieties also come equipped with a torus action, and many interesting properties arise from this action, see, e.g., \cite{Knutson:2005aa}.

In this paper, we study the complexity of certain determinantal varieties closely related to matrix Schubert varieties.
Given a permutation $w \in \symgrp{n}$, the corresponding matrix Schubert variety $\msv{w}$ is a determinantal variety inside the space of complex-valued $n\times n$ matrices $\C^{n \times n}$.
This variety is isomorphic to the product of an affine variety $\Y{w}$ and the affine space $\C^{k}$ where $k$ is as large as possible. Studying the complexity of matrix Schubert varieties turns out not to be ideal, given that the factor of $\C^k$ makes low complexity difficult to achieve. Instead, we focus on $Y_w$ since its defining ideal coincides with that of $\msv{w}$.

Let $\T\simeq (\C^*)^n$ be the torus consisting of diagonal invertible $n\times n$ matrices.\footnote{In this paper, the tori we consider consist of invertible diagonal matrices. We will use the notation $\T$ when referring to such tori and reserve $T$ for an unspecified torus.}
The torus $\TxT$ acts on $\msv{w}$ via the map $((X,Y),M) \mapsto XMY^{-1}$. This action descends to an action on $\Y{w}$.
Characterizations have been given of those $Y_w$ that are toric, one using the Rothe diagram of $w$ by Escobar--Mészáros \cite[Theorem 3.5]{Escobar:2016aa} and another based on pattern avoidance by Stelzer \cite[Theorem 1.6]{Stelzer:2023aa}.
Moreover, in \cite[Theorem 3.14]{Donten-Bury:2023aa} it is shown that there are no $Y_w$ of complexity 1.
A natural question then is to study the set of nonnegative integers that can be achieved as the complexity of $Y_{w}$. On one hand, \cite[Theorem 3.15]{Donten-Bury:2023aa} proves that the set of nonnegative integers that can be achieved as the complexity of $Y_w$ only excludes $1$. 
However, in this result the permutations $w$ belong to some $\symgrp{n}$ where $n$ ranges over all positive integer multiples of $4$.
In this paper we are instead interested in determining the set of nonnegative integers that can be achieved as the complexity of $Y_w$ where $w$ belongs to a fixed $\symgrp{n}$ with $n \geq 4$.
Our main contribution is the following result:

\begin{thm*}[\Cref{thm: max-complexity}, \Cref{thm: all complexities}]
    Fix $n \geq 4$. With respect to the $\TxT$-action, the maximum over all $w \in \symgrp{n}$ of the complexity of the $T$-variety $Y_{w}$ is $(n-1)(n-3)$. 
    The unique permutation at which this maximum is achieved is $s_{n-1}=\perm{1,\dotsc,n-2,n,n-1}$. 
    In addition, for any $d \in \{0,2,3,\dotsc, (n-1)(n-3)\}$ there exists $w \in \symgrp{n}$ such that $Y_{w}$ has complexity $d$.
\end{thm*}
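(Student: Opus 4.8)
The plan is to convert the complexity of $\Y{w}$ into a combinatorial invariant of $w$ and then solve an extremal problem followed by a realization problem. Recall that this complexity equals $\dim \Y{w} - d_{w}$, where $d_{w}$ is the dimension of a generic $\TxT$-orbit in $\Y{w}$. I would make both terms explicit. Since $\msv{w} = \Y{w} \times \C^{k}$, we have $\dim \Y{w} = n^{2} - \length(w) - k$; and because the matrix entry in position $(i,j)$ is a $\TxT$-weight vector $e_{i} - f_{j}$, the orbit dimension $d_{w}$ is the rank of the lattice spanned by the weights of those entries that are generically nonzero on $\Y{w}$. Recording these entries as the edges of the bipartite graph $\graph{w}$ on the row- and column-vertices, this rank equals $|\vertexset{\graph{w}}| - \comp{\graph{w}}$, the number of vertices minus the number of connected components. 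This is exactly the bookkeeping furnished by Escobar--M\'esz\'aros and Donten-Bury--Escobar--Portakal; in particular their work already supplies the toric ($=$ forest) case and the exclusion of complexity $1$, which I will use as a black box.

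For \Cref{thm: max-complexity} I would show that $\dim \Y{w} - d_{w}$ is maximized exactly when $\Y{w}$ is the generic determinantal hypersurface. The structural input is a dichotomy for the rank conditions cutting out $\msv{w}$: a condition of prescribed rank $0$ merely forces a rectangle of entries to vanish, which deletes edges of $\graph{w}$ and lowers $\dim \Y{w}$ without raising $d_{w}$, whereas a condition of positive rank contributes genuine minors, each independent minor lowering $\dim \Y{w}$ by at least as much as the circuits it can add to $\graph{w}$. Optimizing, the complexity is largest when a single positive-rank condition makes the biggest possible square submatrix singular. Since $\rank{w}{(i,j)}$ can force an $(n-1)\times(n-1)$ submatrix to have rank $\le n-2$ but can never make the full matrix singular, the extremal variety is $\Y{w} \cong \{\,M \in \C^{(n-1)\times(n-1)} : \det M = 0\,\}$, with $\dim \Y{w} = (n-1)^{2}-1$ and $d_{w} = 2(n-1)-1$, giving complexity $(n-1)^{2}-1-(2n-3) = (n-1)(n-3)$; the unique permutation whose essential set is the single cell realizing this condition is $\perm{n,n-1,\dots,3,1,2}$. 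I expect the genuine difficulty to lie here, in the tightness-and-uniqueness step: one must rule out that a permutation with several essential cells, and hence a larger, more cyclic graph $\graph{w}$, matches the same difference between $|\edgeset{\graph{w}}| - |\vertexset{\graph{w}}| + \comp{\graph{w}}$ and the number of independent defining minors. This calls for a careful monotonicity estimate quantifying how much dimension each extra essential cell must cost relative to the cycles it creates.

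For \Cref{thm: all complexities} I would interpolate downward from the maximum. Beginning with the determinantal hypersurface above, I would impose additional rank-$0$ conditions, that is, pass to permutations carrying extra essential cells that force a Young-diagram-shaped region of the $(n-1)\times(n-1)$ block to vanish; a region of size $s$ deletes $s$ edges of $\graph{w}$, and as long as the support stays connected and the determinant stays genuine this keeps $d_{w}$ fixed while dropping $\dim \Y{w}$ by $s$, so the complexity becomes $(n-1)(n-3) - s$. Choosing the shape of this vanishing region appropriately at each size then realizes every value from $(n-1)(n-3)$ down to $2$ (for $n=4$ the first step down is witnessed by $\perm{2,1,4,3}$, with complexity $2$), while complexity $0$ is attained by any dominant $w$, for which $\Y{w}$ is a point, and complexity $1$ is impossible by Donten-Bury--Escobar--Portakal. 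The remaining work, and the secondary obstacle, is to verify that the vanishing regions available to honest permutations are flexible enough to delete edges one at a time without disconnecting $\graph{w}$ or collapsing the determinant, so that the unit-step descent never skips a value in $\{2,3,\dots,(n-1)(n-3)\}$. Reassuringly, the descent must stall precisely where a further step would produce complexity $1$: the vanishing region would then be forced either to disconnect $\graph{w}$ or to make the determinant vanish identically, which matches the impossibility of complexity $1$.
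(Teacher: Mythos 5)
Your setup coincides with the paper's: the complexity is $\abs{L'(w)}-\abs*{\vertexset{\graph{w}}}+\abs*{\comp{\graph{w}}}$, the extremal variety is the generic $(n-1)\times(n-1)$ determinantal hypersurface, and lower complexities are reached by imposing rank-zero conditions. But in both halves the step you defer is the actual content of the proof, and in the second half the step you assert is false. For \Cref{thm: max-complexity}, you compute the candidate's complexity correctly but never prove the upper bound or uniqueness; you only say a ``careful monotonicity estimate'' is needed. The paper closes this concretely: $L(w)$ is a skew diagram meeting neither row $1$ nor column $n$, so if its components have $r_i$ rows and $c_i$ columns then $\sum r_i,\sum c_i\le n-1$; bounding each component by a complete bipartite graph gives $\abs{L(w)}-\abs*{\vertexset{\graph{w}}}+k-1\le\sum_i(r_i-1)(c_i-1)-1$, Cauchy--Schwarz gives $\sum_i(r_i-1)(c_i-1)\le(n-1-k)^2$, and $k=1$ yields $(n-2)^2-1=(n-1)(n-3)$; uniqueness then follows from an edge/vertex count forcing $\graph{u}=K_{n-1,n-1}$ and $\opprothe(u)=\{(2,n-1)\}$. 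Without an argument of this kind your first theorem is a matching example plus a conjecture. (Also, under this paper's conventions $\dim\msv{w}=n^2-\abs*{\opprothe(w)}$, the number of \emph{non}inversions, not $n^2-\length(w)$.)

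For \Cref{thm: all complexities}, the claim that a single family of vanishing regions realizes every value from $(n-1)(n-3)$ down to $2$ does not hold. The region you delete must be (up to shift) the opposite Rothe diagram of a permutation of the complementary $(n-2)\times(n-2)$ block, so its size is at most $\frac{(n-2)(n-3)}{2}$, and the descent from the top stalls at $(n-1)(n-3)-\frac{(n-2)(n-3)}{2}=\frac{n(n-3)}{2}$ --- for $n=5$ that is $5$, nowhere near complexity $1$. The paper must therefore restart the descent from $w_0s_{n-2},w_0s_{n-3},\dotsc$, whose complexities are $\dmax(n-1),\dmax(n-2),\dotsc$, and verify that consecutive ranges overlap; they do for $k\ge 6$, but fail at $k=5$, so the value $d=4$ is missed by the construction for every $n\ge5$ and must be supplied by the explicit permutation modeled on $54132$ (and at $k=4$ the missed value is $1$, which is impossible by Donten-Bury--Escobar--Portakal). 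Your closing heuristic that the descent ``must stall precisely where a further step would produce complexity $1$'' is therefore incorrect, and the genuinely missing value $4$ is exactly the kind of gap your unit-step picture cannot detect.
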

We remark that the varieties $Y_{1}$ and $Y_{w_0}$, where $1$ denotes the identity and $w_0$ the longest permutation, are points and so they are toric varieties, i.e., they have complexity $0$.

\section{Background}\label{section:background}

\subsection{\texorpdfstring{$T$-varieties}{T-varieties}}\label{section:T-varieties}

Let $T$ be an \defn{algebraic torus}. An affine normal variety $X$ is a \defn{$\boldsymbol{T}$-variety of complexity $\boldsymbol{d}$}, if it admits an effective $T$-action with $\dim(X) - \dim(T) = d$. Note that normal affine toric varieties are $T$-varieties of complexity $0$. In a sense, the complexity measures how far a $T$-variety is from being toric. For a more extensive exploration of $T$-varieties see \cite{Altmann:2006aa, Altmann:2012aa}.

Given a torus $T\simeq(\C^*)^n$, we can compute the complexity of a $T$-variety via a cone associated to $T$. Let $\M(T)\simeq\Z^n$ denote the \defn{character lattice} of $T$ and $\M(T)_{\R}\simeq\R^n$ the real vector space spanned by $\M(T)$. The \defn{weight cone} $\sigma$ of a torus action is the convex polyhedral cone generated by all weights of the action on $X$ in $\M(T)_{\R}$. For a general point $p \in X$, the closure of the torus orbit $\overline{T \cdot p}$ is the affine normal toric variety associated to the weight cone $\sigma$ and thus $\dim(\overline{T\cdot p}) = \dim(\sigma)$. 
When the action of $T$ on $\overline{T \cdot p}$ is effective, we have that $\dim(T) = \dim(\overline{T \cdot p})$. 
Therefore, the complexity of a $T$-variety $X$ is given by
\begin{equation}\label{eq: complexity polyhedral}
    d = \dim(X) - \dim(\sigma).
\end{equation}

If the action of $T$ is not effective, then the action of $T/S$, where $S$ is the point-wise stabilizer of $X$, is an effective action on $X$. 
Since the weight cone of $X$ with respect to this action is still $\sigma$, the complexity of the $T/S$-action is also given by \eqref{eq: complexity polyhedral}, see \cite[Section 2.1]{Donten-Bury:2023aa} for details.
For the remainder of this paper, whenever we have an ineffective $T$-action on $X$, we will abuse notation and refer to $X$ as a $T$-variety with complexity equal to that of the $T/S$-action.

\subsection{Matrix Schubert varieties}\label{subsec:MSV}
Now we focus our attention on a specific class of $T$-varieties called matrix Schubert varieties. 
In this subsection, we define our notation and conventions, provide background results, and introduce our torus action of interest.

Let $[n] \coloneq \{1, \dotsc, n\}$ and let $\symgrp{n}$ denote the symmetric group of permutations on $[n]$. For $w \in \symgrp{n}$ we write $w$ in \defn{one-line notation} as $ w = \perm{w(1),w(2),\dotsc, w(n)} = \perm{w_{1}, \dotsc, w_{n}}$. If $n < 10$, we will omit the brackets and commas and write $w = w_{1}w_{2} \dotsb w_{n}$. 
The set of inversions of a permutation $w \in \symgrp{n}$ is
\begin{equation*}
    \Inv(w) \coloneq \left\{(i,j) \in [n]^{2} : i < j, w(i) > w(j)\right\}.
\end{equation*}
    
The \defn{permutation matrix} of $w \in \symgrp{n}$, which by abuse of notation we also call $w$, is the $n\times n$ matrix with entries defined by
\begin{equation}
w_{ij}:= 
	\begin{cases}
		1,	&	\text{if } w(j) = i,\\
		0,	&	\text{otherwise.}
	\end{cases}    
\end{equation}
In other words, the permutation matrix associated to $w \in \symgrp{n}$ is the $n \times n$ matrix whose $j$th column is the $w_{j}$th standard basis vector for all $j \in [n]$. For example, the permutation matrix associated to $34512 \in \symgrp{5}$ is
\[
\begin{pmatrix}
    0 & 0 & 0 & 1 & 0\\
    0 & 0 & 0 & 0 & 1\\
    1 & 0 & 0 & 0 & 0\\
    0 & 1 & 0 & 0 & 0\\
    0 & 0 & 1 & 0 & 0
\end{pmatrix}.
\]

Let $\B_-$ denote the set of \defn{invertible lower triangular matrices} in $\C^{n \times n}$ and let $\B_+$ denote the set of \defn{invertible upper triangular matrices}. Consider the action of $\B_-\times\B_+$ on $\C^{n \times n}$ given by
\begin{equation}\label{eq:B-xB+-action}
    \begin{aligned}
        (\B_-\times\B_+) \times \C^{n\times n} &\to \C^{n\times n}\\
        ((X,Y),M)                              &\mapsto XMY^{-1}.
    \end{aligned}
\end{equation}
The orbit of a matrix $M$ under the $\B_-\times\B_+$-action is determined by certain rank conditions on submatrices of $M$. Permutation matrices form a set of representatives for the set of orbits that consist of nonsingular matrices.

To describe when a matrix $M$ is in the orbit of some permutation matrix $w \in \C^{n \times n}$, we first define submatrices $M_{\square}^{a,b}$ of $M$.
Given a matrix $M \in \C^{n \times n}$ and $a, b \in [n]$, let $M_{\square}^{a,b} \in \C^{a \times b}$ be the upper left submatrix of $M$ consisting of rows $1, \dotsc, a$ and columns $1, \dotsc, b$ as in \Cref{fig:submatrix}.
Let $\rank{M}{(a, b)}$ denote the \defn{rank} of $M_{\square}^{a, b}$. 
For a permutation matrix $w \in \C^{n \times n}$, a matrix $M$ is in the orbit $\B_{-}w\B_{+}$ if and only if $\rank{M}{(a, b)} = \rank{w}{(a, b)}$ for all $a, b \in [n]$.

\begin{figure}[ht]
\centering
    \begin{tikzpicture}[baseline=(O.base), scale = 1.2]
        \node(O) at (1,1) {};
	\draw (0,0) rectangle (2,2);
	\draw (0,2) rectangle (1.7, .8);
	\draw (-.2, .9) node {$a$};
	\draw (1.6, 2.2) node {$b$};
	\draw (.85, 1.4) node {$M_{\square}^{a, b}$};
    \end{tikzpicture}\\
\caption{The submatrix $M_{\square}^{a, b}$ of $M$. This figure is adapted from \cite{Escobar:2016aa}.}
\label{fig:submatrix}
\end{figure}
	
The \defn{matrix Schubert variety} associated to $w \in \symgrp{n}$ is 
the Zariski closure $\msv{w}\coloneq \overline{\B_-w\B_+}\subseteq \C^{n \times n}$. Fulton introduced matrix Schubert varieties in 1992 in his study of degeneracy loci of a map of flagged vector bundles \cite{Fulton:1992aa}. He described the ideals defining matrix Schubert varieties combinatorially using Rothe diagrams. 

The \defn{Rothe diagram} of $w\in \symgrp{n}$ is the set
\begin{equation}\label{eq:rothe}
    \rothe(w)\coloneq \{(i,j) \in [n]^2 : j < w^{-1}(i), w(j) > i\}.
\end{equation}
Equivalently, 
\begin{equation}\label{eq:rothe-inv}
    \rothe(w) = \{(w(j),i)\in [n]^2 : i < j, w(i) > w(j)\}
\end{equation}
which means entries of $\rothe(w)$ are in one-to-one correspondence with inversions of $w \in \symgrp{n}$. 
It follows that the Coxeter length $\length(w)$ of $w$ is equal to $\abs*{\rothe(w)}$.

We use an $n \times n$ grid to visualize the permutation matrix of $w \in \symgrp{n}$ and its associated Rothe diagram $\rothe(w)$. 
To illustrate $w$ using this grid, use matrix coordinates to place a $\bullet$ in position $(w(j), j)$ for each $j \in [n]$. 
In effect, this replaces each $1$ of the permutation matrix with a $\bullet$ and replaces the $0$s with empty boxes. 
Then from each $\bullet$ fire lasers south and east. The boxes not hit by a laser are elements in the Rothe diagram $\rothe(w)$. 
In other words, every element $(i,j)\in\rothe(w)$ has a $\bullet$ to its south and a $\bullet$ to its east. 
Note that each connected component of $\rothe(w)$ is a Young diagram in English notation.

\begin{figure}[ht]
\centering
\begin{tikzpicture}[baseline=(O.base),scale=1.3]
    \node(O) at (1,1) {};
    \fill[cyan!20]
        (0,2.5) rectangle (1,2)
        (0,2.5) rectangle (.5,1.5)
        (1.5,.5) rectangle (2,1);
    \draw[step=.5] (0,0) grid (2.5,2.5);
    \draw[draw=red]
        (.25,1.75) node {\scriptsize $(2,1)$}
        (.75,2.25) node {\scriptsize $(1,2)$}
        (1.75,.75) node {\scriptsize $(4,4)$}
        (.25,0) -- (.25,1.25) node {$\bullet$} -- (2.5,1.25)
        (.75,0) -- (.75,1.75) node {$\bullet$} -- (2.5,1.75)
        (1.25,0) -- (1.25,2.25) node {$\bullet$} -- (2.5,2.25)
        (1.75,0) -- (1.75,.25) node {$\bullet$} -- (2.5,.25)
        (2.25,0) -- (2.25,.75) node {$\bullet$} -- (2.5,.75);
\end{tikzpicture}
\caption{The Rothe diagram of the permutation $32154$.}
\label{fig:Rothe(32154)}
\end{figure}

The \defn{essential set} of a permutation $w \in \symgrp{n}$, denoted $\ess(w)$, is the set of all southeast corners of all connected components of $\rothe(w)$. 

\begin{ex}
    Let $w=32154\in\symgrp{5}$.
    The Rothe diagram of $w$ is $\rothe(w)=\{(1,1),(1,2),(2,1),(4,4)\}$, as illustrated by the set of blue boxes in \Cref{fig:Rothe(32154)}.
    The essential set of $w$ is $\ess(w)=\{(1,2),(2,1),(4,4)\}$.
\end{ex}

The essential set of $w$ can be used to define $\msv{w}$. The following theorem is written as in \cite{Portakal:2023aa}, but was originally stated and proved in \cite{Fulton:1992aa}.
\begin{thm}[\protect{\cite[Proposition 3.3, Lemma 3.10]{Fulton:1992aa}}] 
\label{thm:Fulton}
    The matrix Schubert variety $\msv{w}$ is an affine variety of dimension 
    $n^2-\abs*{\rothe(w)}$. It is defined as a scheme by the determinants
    encoding the inequalities ${\rank{M}{(a, b)} \leq \rank{w}{(a, b)}}$ for all 
    $(a, b) \in \ess(w)$.    
\end{thm}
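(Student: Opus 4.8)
The plan is to separate the statement into a dimension count, governed by the orbit $\B w \B$ itself, and the scheme-theoretic description by essential minors, which is the substantial part.

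First I would record that the $\BxB$-action preserves every lower-left rank. Left multiplication by an invertible upper triangular matrix rewrites the bottom rows $a,\dotsc,n$ as invertible combinations of themselves, and right multiplication by the inverse of an upper triangular matrix rewrites the left columns $1,\dotsc,b$ as invertible combinations of themselves; hence $\rank{M}{(a,b)}$ is constant on $\B w \B$, so every $M \in \B w \B$ satisfies $\rank{M}{(a,b)} = \rank{w}{(a,b)}$ for all $a,b$. Since the rank of a fixed submatrix is lower semicontinuous, passing to the Zariski closure gives
\[
\msv{w} \subseteq Z_{w} := \left\{ M \in \C^{n\times n} : \rank{M}{(a,b)} \le \rank{w}{(a,b)} \text{ for all } a,b \in [n] \right\},
\]
and, being the closure of the image of the irreducible group $\BxB$, the variety $\msv{w}$ is irreducible. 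For the dimension I would compute $\dim \B w \B$ through the stabilizer, which is isomorphic to $\{X \in \B : w^{-1}Xw \in \B\}$; its defining conditions $X_{w(k),w(l)}=0$ for $k>l$ are indexed exactly by the inversions of $w$, so $\dim(\mathrm{stab}) = \binom{n+1}{2} - \length(w)$ and $\dim \B w \B = \binom{n+1}{2} + \length(w)$. The identity $\length(w) = \binom{n}{2} - \abs{\opprothe(w)}$ from the background rewrites this as $n^{2} - \abs{\opprothe(w)}$, establishing the dimension claim.

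Next I would carry out the combinatorial reduction to the essential set. For a box of $\opprothe(w)$ that is not a northeast corner, moving one step north or east lands on another diagram box at which $\rank{w}{}$ is unchanged, since the newly included row or column contributes no dot to the lower-left region (this is where the defining inequalities $w(b+1)<a$ and $w^{-1}(a)>b$ for diagram boxes are used). Monotonicity $\rank{M}{(a,b)} \le \rank{M}{(a-1,b)}$ and $\rank{M}{(a,b)} \le \rank{M}{(a,b+1)}$ then propagates each bound to the northeast corner of its component, i.e.\ to a box of $\ess(w)$. Boxes outside $\opprothe(w)$ that still carry a nontrivial rank bound are handled by combining several essential conditions — for instance a vanishing left column together with a vanishing bottom row forces the rank of their common block to drop. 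This yields $Z_{w} = \{ M : \rank{M}{(a,b)} \le \rank{w}{(a,b)} \text{ for } (a,b)\in\ess(w)\}$.

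It then remains to prove that this locus equals $\msv{w}$ and that the essential minors generate a radical ideal, which is the heart of the theorem and the main obstacle. Conceptually, $\C^{n\times n}$ is the disjoint union of $\BxB$-orbits indexed by partial permutation matrices, each orbit being a rank-equality stratum; thus $Z_{w}$ is a union of orbits and $\msv{w}$ is the union of the orbits in the closure of $\B w \B$, so the set-theoretic equality $Z_{w}=\msv{w}$ amounts to matching the orbit-closure order with the rank order. To secure both this and reducedness simultaneously, I would run a Gröbner degeneration with an antidiagonal term order, showing that the essential minors form a Gröbner basis whose initial ideal is the squarefree Stanley--Reisner ideal of a shellable (hence Cohen--Macaulay) complex; a squarefree initial ideal forces the essential ideal to be radical, and together with the irreducibility and dimension already established this identifies it with the prime ideal of $\msv{w}$. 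The genuinely hard point is exactly this last step: proving that the \emph{essential} minors, rather than the full family of rank minors, generate the entire radical ideal — equivalently, that the degeneration is to a reduced scheme of the expected dimension — which is what the Gröbner machinery (or, in Fulton's original approach, an inductive degeneration along the Bruhat order) is needed to establish.
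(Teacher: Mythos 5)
You should first be aware that the paper contains no proof of this statement to compare against: it is quoted (following Donten-Bury--Escobar--Portakal) as a background result and attributed to Fulton, so the benchmark is Fulton's Proposition 3.3 and Lemma 3.10 (or the Gr\"obner-basis proof of Knutson--Miller), not anything in this paper. Measured against that, the parts of your argument that you actually execute are correct. The orbit-stabilizer dimension count is right: the stabilizer $\{X \in \B : w^{-1}Xw \in \B\}$ is cut out by the vanishing of the entries $X_{w(k),w(l)}$ with $k>l$ and $w(k)<w(l)$, which are indexed by inversions, so $\dim(\B w \B) = \binom{n+1}{2} + \length(w)$, and the paper's identity $\length(w) = \binom{n}{2} - \abs{\opprothe(w)}$ converts this to $n^{2} - \abs{\opprothe(w)}$; irreducibility and lower semicontinuity of rank are used correctly. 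The reduction to $\ess(w)$ is also the right mechanism (it is exactly Fulton's Lemma 3.10): if a diagram box has a diagram box to its north or east, $\rank{w}{}$ is unchanged across that step while $\rank{M}{}$ is monotone under enlarging the submatrix, so each condition follows from the one at the northeast corner of its component. Your handling of boxes \emph{outside} $\opprothe(w)$, however, is only a gesture ("combining several essential conditions"), and that case does need an argument.

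The genuine gap is the one you name yourself: everything that makes this a theorem rather than an observation --- that the ideal $I$ generated by the essential minors is radical (indeed prime) and that $V(I)$ equals $\msv{w}$ rather than some larger union of $\BxB$-orbit closures --- is deferred to "Gr\"obner machinery" or "Fulton's inductive degeneration" without being carried out, and no self-contained route is given. There is also a small logical slip in how you propose to close the loop: radicality of $I$ together with $\dim V(I) = \dim \msv{w}$ does not identify $I$ with the prime ideal of $\msv{w}$, because the irreducibility you "already established" is irreducibility of $\msv{w}$, not of $V(I)$; equidimensionality obtained from shellability of the initial complex is not irreducibility. You would need either the set-theoretic equality $V(I) = \msv{w}$ (via the orbit-closure/rank-order matching you mention, itself a nontrivial combinatorial fact), after which radicality and the Nullstellensatz finish the proof, or a direct primality argument as in Fulton. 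So as a standalone proof the proposal is incomplete at precisely its core step; as a map of how the literature proves the result it is accurate, and since the paper itself simply cites Fulton here, relying on that citation is all the paper's own exposition does.
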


We illustrate this theorem in the following example. 

\begin{ex}
    Consider the permutation $32154\in\symgrp{5}$.
    In \Cref{fig:Rothe(32154)}, we see that $\abs*{\rothe(32154)}=4$ and $\ess(32154)=\{(1,2),(2,1),(4,4)\}$.
    The matrix Schubert variety $\msv{32154}\subset\C^{5\times 5}$ is defined by the inequalities $\rank{M}{(1,2)}\leq \rank{32154}{(1,2)}=0$, $\rank{M}{(2,1)}\leq \rank{32154}{(2,1)}=0$, and $\rank{M}{(4,4)}\leq \rank{32154}{(4,4)}=3$.
    Thus, the defining ideal of $\msv{32154}$ is
    \begin{equation*}
        \left(z_{11},z_{12},z_{21},\det(M_{\square}^{4,4})\right)
        = \left(z_{11},z_{12},z_{21}, \det
        \begin{pmatrix}
            z_{11} & z_{12} & z_{13} & z_{14}\\
            z_{21} & z_{22} & z_{23} & z_{24}\\
            z_{31} & z_{32} & z_{33} & z_{34}\\
            z_{41} & z_{42} & z_{43} & z_{44}
        \end{pmatrix}
        \right) \subset \C[z_{11},\dotsc, z_{55}]
    \end{equation*}
    and $\dim(\msv{32154}) = 25 - \abs*{\rothe(32154)} = 21$.
\end{ex}

Lastly, we observe that matrix Schubert varieties are $T$-varieties. First note that, as explained in \cite{Knutson:2005aa} after Theorem 2.4.3, they are normal varieties.
Let $\T$ be the set of invertible $n\times n$ diagonal matrices.
We obtain a torus action on $\msv{w}$ by restricting the $\B_-\times\B_+$-action from \eqref{eq:B-xB+-action} to $\TxT$.

\section{The variety \texorpdfstring{$\Y{w}$}{Yw} and its complexity}
In this section we closely follow \cite{Escobar:2016aa,Donten-Bury:2023aa}.
However, the matrix Schubert varieties in \cite{Donten-Bury:2023aa} are defined using a $\B_+\times\B_+$-action on $\C^{n\times n}$.
This imposes southwest rank conditions instead of northwest rank conditions on submatrices.
We can translate from the conventions in this paper to the conventions set in \cite{Donten-Bury:2023aa} via the map $w \mapsto w_0w$, where $w_0$ is the longest permutation in $\symgrp{n}$.

Given $w \in \symgrp{n}$, let $\Y{w}$ be an affine variety such that $\msv{w} = \Y{w} \times \C^{k}$ where $k$ is as large as possible.
From this description and since $\msv{w}$ is normal, it follows that $\Y{w}$ is a normal variety.
Below we define $Y_w$ using diagrams constructed from the Rothe diagram $\rothe(w)$.
Once we describe the torus action in \Cref{sec:torus_action_Yw}, we will see that $\Y{w}$ is a $T$-variety.

If $(1,1)\in\rothe(w)$, then we call the connected component of $(1,1)\in\rothe(w)$ the \defn{dominant piece} $\dom(w)$ of $w$. 
If $(1,1)\notin\rothe(w)$, then we define $\dom(w)$ to be empty. 
Note that $(a, b) \in \dom(w)$ if and only if $\rank{w}{(a, b)} = 0$. 
The \defn{northwest diagram} of $w$, denoted $\nw(w)$, is the set of $(i, j)$ that are northwest of some element in $\ess(w)$. 
Finally, we define $L(w) \coloneq \nw(w) \sm \dom(w)$ and $L'(w) \coloneq \nw(w) \sm \rothe(w)$ to be the \defn{$\boldsymbol{L}$-diagram} and \defn{$\boldsymbol{L'}$-diagram} of $w$ respectively. 
\Cref{fig:32154-diagrams} illustrates the Rothe, northwest, $L$, and $L'$-diagrams of the permutation $32154$.
Since connected components of $\rothe(w)$ are Young diagrams in English notation, it follows that $\dom(w)$ and $\nw(w)$ are also Young diagrams. By construction, $L(w)$ is a skew diagram. 
However, $L'(w)$ is not necessarily a skew diagram, see for example, \Cref{fig:diagram-L'(15243)}.

\begin{rmk}\label{rmk:1jin}
    Note that from \eqref{eq:rothe} it is immediate that $\rothe(w)$ contains no elements of the form $(n,j)$ or $(i,n)$ with $i,j\in[n]$. The same claim follows for $L(w)$. We will use this observation later in the proof of \cref{thm: max-complexity}.
\end{rmk}

\begin{figure}[ht]
    \begin{subfigure}{.16\textwidth}
    \centering
    \scalebox{1}[-1]{\resizebox{\linewidth}{!}{
    \begin{tikzpicture}[baseline=(O.base), scale = 1.3]
    \node(O) at (1,1) {};
    \fill[cyan!20]
	   (0, 0) rectangle (.5, 1)
	   (0, 0) rectangle (1, .5)
          (1.5, 1.5) rectangle (2,2);
    \draw[step=.5] (0,0) grid (2.5,2.5);
    \end{tikzpicture}
    }
    }
    \caption{$\rothe(32154)$.}
    \label{fig:rothe(32154)}
    \end{subfigure}
    \hfill
    \begin{subfigure}{.16\textwidth}
    \centering
    \scalebox{1}[-1]{\resizebox{\linewidth}{!}{
    \begin{tikzpicture}[baseline=(O.base), scale=1.3]
    \node(O) at (1,1) {};
    \fill[orange!20]
        (0,0) rectangle (2,2);
    \draw[step=.5] (0,0) grid (2.5,2.5);
    \end{tikzpicture}
    }
    }
    \caption{$\nw(32154)$.}
    \label{fig:sw(32154)}
    \end{subfigure}
    \hfill
    \begin{subfigure}{.16\textwidth}
    \centering
    \scalebox{1}[-1]{\resizebox{\linewidth}{!}{
    \begin{tikzpicture}[baseline=(O.base), scale=1.3]
    \node(O) at (1,1) {};
    \fill[cyan!20]
        (0,1) rectangle (2,2)
        (.5,.5) rectangle (2,1)
        (1,0) rectangle (2,.5);
    \draw[step=.5] (0,0) grid (2.5,2.5);
    \end{tikzpicture}
    }
    }
    \caption{$L(32154)$.}
    \label{fig:L(32154)}
    \end{subfigure}
    \hfill
    \begin{subfigure}{.16\textwidth}
        \centering
        \scalebox{1}[-1]{\resizebox{\linewidth}{!}{
        \begin{tikzpicture}[baseline=(O.base), scale=1.3]
        \node(O) at (1,1) {};
        \fill[violet!20]
            (0,1) rectangle (1.5,2)
            (.5,.5) rectangle (2,1.5)
            (1,0) rectangle (2,.5);
        \draw[step=.5] (0,0) grid (2.5,2.5);
        \end{tikzpicture}
        }
        }
        \caption{$L'(32154)$.}
        \label{fig:L'(32154)}
    \end{subfigure}
    \hfill
    \begin{subfigure}{.16\textwidth}
        \centering
        \scalebox{1}[-1]{\resizebox{\linewidth}{!}{
        \begin{tikzpicture}[baseline=(O.base), scale=1.3]
        \node(O) at (1,1) {};
        \fill[violet!20]
            (0,0) rectangle (.5,2)
            (.5,0) rectangle (1,.5)
            (1,0) rectangle (1.5,1.5)
            (1.5,0) rectangle (2,1);
        \draw[step=.5] (0,0) grid (2.5,2.5);
        \end{tikzpicture}
        }
        }
        \caption{$L'(15243)$.}
        \label{fig:diagram-L'(15243)}
    \end{subfigure}
\caption{The Rothe diagram, northwest diagram, $L$-diagram, and $L'$-diagram of the permutation $32154\in\symgrp{5}$, and the $L'$-diagram of the permutation $15243\in\symgrp{5}$.}
\label{fig:32154-diagrams}
\end{figure}

Note that by \cref{thm:Fulton}, the determinantal ideal defining $\msv{w}$ depends only on the submatrices contained in $\nw(w)$. 
To construct $\Y{w}$, consider the image of $\msv{w}$ under the projection of $\C^{n^{2}}$ onto the linear subspace spanned by the elementary matrices whose entries are not in $\nw(w)$. 
Since these entries are free in $\msv{w}$, it follows that the projection is isomorphic to $\C^{n^2-\abs*{\nw(w)}}$. 
Then $\Y{w}$ is defined to be the projection onto the entries of $L(w)$. Therefore, it follows that $\msv{w} = \Y{w} \times \C^{n^2-\abs*{\nw(w)}}$ with
\begin{equation}\label{eq:dim(Y_w)}
    \dim(\Y{w}) = \underbrace{n^2 - \abs*{\rothe(w)}}_{\dim(\msv{w})} - (n^2 - \abs*{\nw(w)}) = \abs*{\nw(w)} - \abs*{\rothe(w)}
    = \abs*{L'(w)}.
\end{equation}

\begin{ex}
    Once again, consider the permutation $32154 \in \symgrp{5}$.
    \Cref{fig:32154-diagrams} illustrates that $\abs*{\rothe(32154)}=4$, $\abs*{\nw(32154)}=16$, and $\abs*{L'(32154)}=12$.
    In this case, $k=n^2 - \abs*{\nw(32154)} = 25-16=9$.
    The matrix Schubert variety associated to $32154$ can be written as $\msv{32154}=\Y{32154} \times \C^{9}$, where $\Y{32154}$ is defined by the ideal
    \begin{equation*}
        \left(\det
        \begin{pmatrix}
            0       & 0      & z_{13} & z_{14}\\
            0       & z_{22} & z_{23} & z_{24}\\
            z_{31}  & z_{32} & z_{33} & z_{34}\\
            z_{41}  & z_{42} & z_{43} & z_{44}
        \end{pmatrix}
        \right) \subset \C[z_{13},z_{14},z_{22},z_{23},z_{24},z_{31},z_{32},z_{33},z_{34},z_{41},z_{42},z_{43},z_{44}],
    \end{equation*}
    and $\dim(\Y{32154}) = \abs*{L'(32154)} = 12$.
\end{ex}

\subsection{The torus action on \texorpdfstring{$\Y{w}$}{Yw}}\label{sec:torus_action_Yw}

Let $w\in\symgrp{n}$.
Note that $\Y{w}$ is isomorphic to the subvariety of $\msv{w}$ obtained by setting 
$z_{ij} = 0$ for all $(i,j)\notin\nw(w)$. 
Thus, the $\B_-\times\B_+$-action on $\msv{w}$ described in \Cref{subsec:MSV} induces a $\B_-\times\B_+$-action on $\Y{w}$. The 
\defn{usual torus action} is the restriction to $\TxT$ of the $\B_-\times\B_+$-action on $\Y{w}$, where $\T$ is the set of {invertible $n\times n$ diagonal matrices}.
Concretely, given $M\in \C^{n \times n}$ and $(X,Y)\in \TxT$,
\begin{align*}
    (X, Y)\cdot M                     &\mapsto	XMY^{-1}.
\end{align*}
Throughout this paper, we consider the $T$-variety structure of $Y_w$ with respect to this torus action.
The torus $\TxT$ has character lattice $\M(\TxT)\cong \Z^{n} \times \Z^{n}$. 
Let $e_{1}, \dotsc, e_{n}, f_{1}, \dotsc, f_{n}$ denote the standard basis for
$\Z^{n} \times \Z^{n}$.
Let $X=\diag(s_1,\ldots,s_n)$ and $Y=\diag(t_1,\ldots,t_n)$. 
Since the $(i,j)$-coordinate of $XMY^{-1}$ is $s_{i}t_{j}^{-1}z_{ij}$, the \defn{weights} of the $\TxT$-action on 
$\C^{n \times n} = \Spec\left(\C[z_{11}, \dotsc, z_{nn}]\right)$ form the set
$\{e_{i} - f_{j} : i, j \in [n]\}$.
Since $\Y{w}$ can be obtained from $\msv{w}$ by setting 
$z_{ij} = 0$ for all $(i,j)\notin\nw(w)$, the \defn{weight cone} of the $\TxT$-action on $Y_{w}$ is 
\[
\sigma_{w} = \cone\left(\left\{e_{i} - f_{j} : (i, j) \in L(w)\right\}\right).
\]

It is useful to note that $\sigma_{w}$ is the edge cone constructed from a graph $G^w$, defined below. 
These graphs were first defined by Portakal in \cite{Portakal:2023aa}.
Concretely, let $G$ be a directed graph with vertex set $\vertexset{G}$ and edge set $\edgeset{G}$. The edge cone $\edgecone{G} \subseteq \M(T)_{\R}$ is given by
\[
\edgecone{G} = \cone\left(\left\{e_{i} - e_{j} : \edge{i}{j} \in 
E(G)\right\}\right),
\]
see, e.g., \cite{VV06}.
The following result gives a formula for the dimension of the edge cone.
\begin{lemma}[\protect{\cite[Lemma 2.3]{Donten-Bury:2023aa}}] \label{lem:dim(edgecone)}
Let $G$ be a directed acyclic graph with vertex set $\vertexset{G}$ and set $\comp{G}$ of connected components. The dimension of the edge cone $\edgecone{G} \subseteq \M(T)_{\R}$ is 
\[
\dim(\edgecone{G}) = \abs*{\vertexset{G}} - \abs*{\comp{G}}.
\]
\end{lemma}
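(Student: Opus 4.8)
The plan is to reduce the cone-dimension question to a statement about the $\R$-linear span of the edge vectors, and then to evaluate that span one connected component at a time. Since a polyhedral cone and its linear hull have the same dimension, the first step is to record that
\[
\dim(\edgecone{G}) = \dim_{\R} \operatorname{span}_{\R}\left\{e_{i} - e_{j} : \edge{i}{j} \in \edgeset{G}\right\}.
\]
A linear span is closed under negation, so $e_{j} - e_{i} = -(e_{i} - e_{j})$ belongs to the span whenever $e_{i} - e_{j}$ does. Consequently the span is insensitive to the orientation of the edges and depends only on the underlying undirected graph; in particular the acyclicity hypothesis is not actually needed for the dimension count, and I may replace $G$ by its underlying undirected graph for the remainder of the argument.

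Next I would split the span along connected components. Write $\vertexset{G} = V_{1} \sqcup \dotsb \sqcup V_{c}$ with $c = \abs*{\comp{G}}$ for the vertex sets of the components. Every generator $e_{i} - e_{j}$ is supported on the two coordinates of a single component, so it lies in the coordinate subspace $\R^{V_{\alpha}}$ of that component. These coordinate subspaces are spanned by disjoint subsets of the standard basis, hence intersect only in $0$, and the span of all edge vectors is their internal direct sum. Dimension is additive over a direct sum, so it suffices to compute, for each $\alpha$, the dimension $d_{\alpha}$ of the span of the edge vectors of the single connected component on $V_{\alpha}$, and then sum.

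The substantive step is to show $d_{\alpha} = \abs*{V_{\alpha}} - 1$. For the upper bound, every generator $e_{i} - e_{j}$ has coordinate sum $0$, so the span lies in the hyperplane $\{x \in \R^{V_{\alpha}} : \sum_{k} x_{k} = 0\}$ of dimension $\abs*{V_{\alpha}} - 1$. For the matching lower bound I would produce $\abs*{V_{\alpha}} - 1$ independent edge vectors by fixing a spanning tree of the component and taking the vectors $e_{i} - e_{j}$ of its edges; their independence is the classical fact that the incidence matrix of a tree has full rank. The cleanest self-contained proof is induction on $\abs*{V_{\alpha}}$: remove a leaf $v$ together with its unique incident tree edge, note that this removed vector is the only one with a nonzero $v$-coordinate (so it cannot be a linear combination of the others), and apply the inductive hypothesis to the smaller tree. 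This is the only step that is not pure bookkeeping about spans and direct sums. Combining the two bounds gives $d_{\alpha} = \abs*{V_{\alpha}} - 1$, and summing over components yields
\[
\dim(\edgecone{G}) = \sum_{\alpha = 1}^{c}\left(\abs*{V_{\alpha}} - 1\right) = \abs*{\vertexset{G}} - \abs*{\comp{G}},
\]
as claimed.
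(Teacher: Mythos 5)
Your proof is correct. One structural remark: the paper does not prove this lemma at all --- it is imported verbatim, with citation, from \cite[Lemma 2.3]{Donten-Bury:2023aa} --- so there is no in-paper argument to compare against; what you have written is a self-contained proof of the cited fact. Your route is the classical incidence-matrix rank computation: reduce the cone dimension to the dimension of the linear span of the generators (valid, since a polyhedral cone spans the same subspace as its generating set), observe that the span is orientation-insensitive, decompose as a direct sum over (weakly) connected components, and for each component pin the dimension between the sum-zero hyperplane bound from above and the spanning-tree independence from below, with the leaf-removal induction supplying the latter. All steps check out, including the two refinements you flag: acyclicity is genuinely irrelevant to the dimension count (it matters elsewhere in \cite{Donten-Bury:2023aa}, e.g.\ for properties of the cone beyond its dimension, but not here), and ``connected components'' of a directed graph must be read as components of the underlying undirected graph, which your reduction to the undirected setting makes explicit and which is the reading the paper relies on when applying the lemma to the bipartite graphs $\graph{w}$ (all of whose edges point from row vertices to column vertices). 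The one thing to watch in the write-up is the ambient space: the generators live in $\M(\TxT)_{\R} \cong \R^{n}\times\R^{n}$ rather than in $\R^{\vertexset{G}}$, but since every coordinate outside $\vertexset{G}$ vanishes on all generators, restricting to the coordinate subspace $\R^{\vertexset{G}}$ is harmless; stating that sentence would close the only small gap between your argument and the statement as formulated.
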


Given $w \in \symgrp{n}$, let $\graph{w}$ be the directed acyclic bipartite graph with 
$\vertexset{\graph{w}} \subseteq \{1, \dotsc, n\} \sqcup \{\overline{1}, \dotsc, 
\overline{n}\}$ and 
$\edgeset{\graph{w}} = \{\, \edge{a}{\overline{b}} : (a, b) \in L(w) \,\}$
such that $\graph{w}$ has no isolated vertices.
Note that $\abs*{\vertexset{\graph{w}}}$ is equal to the number of nonempty rows plus the number of nonempty columns in $L(w)$.
By definition, $\sigma_{w} = \edgecone{\graph{w}}$.

\begin{figure}[ht]
\centering
\begin{tikzpicture}[]
\graph [simple, grow right=2cm, math nodes] {
    {1,2,3,4} ->[complete bipartite] {"\overline{1}", "\overline{2}", "\overline{3}", "\overline{4}"};
    2 -!- "\overline{1}";
    1 -!- {"\overline{1}", "\overline{2}"};
    };
\end{tikzpicture}
\caption{The bipartite graph $\graph{32154}$.}
\label{fig:graph(32154)}
\end{figure}

Following \cite[pg.~841]{Donten-Bury:2023aa}, $\Y{w}$ is a $T$-variety of complexity $d$ with respect to the $\TxT$ torus action if and only if
\begin{equation} \label{eq:dim(sigma)}
\dim(\sigma_{w}) = \dim(\Y{w}) - d = \abs{L'(w)} - d.
\end{equation}

Let $d_{w}$ denote the complexity of the $T$-variety $Y_{w}$ with respect to the $\TxT$ torus action. If the permutation is clear from context, we will drop the subscript and write $d$. Combining \eqref{eq:dim(sigma)} with \Cref{lem:dim(edgecone)}, we get that the complexity of $\Y{w}$ is given by
\begin{equation} \label{eq: complexity L'(w)}
    d_{w} = \abs{L'(w)} - \dim(\sigma_{w}) = \abs{L'(w)} - \abs*{\vertexset{\graph{w}}} + \abs*{\comp{\graph{w}}}.
\end{equation}

\begin{ex}
    Continuing with the permutation $32154 \in \symgrp{5}$, in \Cref{fig:L'(32154)} we see that $\abs*{L'(32154)} = 12$.
    Moreover, $\abs*{\vertexset{\graph{32154}}}=8$ and $\abs*{\comp{\graph{32154}}}=1$, see \Cref{fig:L(32154)} and \Cref{fig:graph(32154)}.
    Then, with respect to the $\TxT$-action, $\Y{32154}$ is a $T$-variety of complexity 
    \begin{equation*}
        d_{32154} = \abs*{L'(32154)} - 8 + 1 = 5.
    \end{equation*}
\end{ex}
Note that
\begin{equation*}
    L'(w) = L(w) \sm \rothe(w) = \left(\nw(w) \sm \dom(w)\right) \sm \rothe(w).
\end{equation*}
Since $\dom(w) \subseteq \rothe(w)$, it follows that $\abs*{L'(w)}=\abs*{L(w)} + \abs*{\dom(w)} - \abs*{\rothe(w)}$. Thus, we can write the complexity of a $\TxT$-variety $\Y{w}$ as
\begin{equation}\label{eq:complexity_L}
    d_w = \abs*{L(w)} + \abs*{\dom(w)} - \abs*{\rothe(w)} - \abs*{\vertexset{\graph{w}}} + \abs*{\comp{\graph{w}}}.
\end{equation}

\begin{figure}[ht]
    \begin{subfigure}{.22\textwidth}
    \centering
    \resizebox{\linewidth}{!}{
    \begin{tikzpicture}[baseline=(O.base), scale = 1.3]
    \node(O) at (1,1) {};
    \fill[cyan!20]
        (1, .5) rectangle (1.5, 1.5);
    \draw 
        (.25,2.25) node {$\bullet$}
        (.75,1.75) node {$\bullet$}
        (1.25,.25) node {$\bullet$}
        (1.75,1.25) node {$\bullet$}
        (2.25,.75) node {$\bullet$};
    \draw[step=.5] (0,0) grid (2.5,2.5);
    \end{tikzpicture}
    }
    \caption{$\rothe(12534)$}
    \end{subfigure}\hfill
    \begin{subfigure}{.22\textwidth}
    \centering
    \resizebox{\linewidth}{!}{
    \begin{tikzpicture}[baseline=(O.base),scale=1.3]
        \node(O) at (1,1) {};
        \fill[orange!20]
            (0,2.5) rectangle (1.5,.5);
        \draw[step=.5] (0,0) grid (2.5,2.5);
    \end{tikzpicture}
    }
    \caption{$\nw(12534)$}
    \end{subfigure}\hfill
    \begin{subfigure}{.22\textwidth}
    \centering
    \resizebox{\linewidth}{!}{
    \begin{tikzpicture}[baseline=(O.base),scale=1.3]
        \node(O) at (1,1) {};
        \fill[cyan!20]
            (0,2.5) rectangle (1.5,.5);
        \draw[step=.5] (0,0) grid (2.5,2.5);
    \end{tikzpicture}
    }
    \caption{$L(12534)$}
    \end{subfigure}\hfill
    \begin{subfigure}{.22\textwidth}
        \centering
        \resizebox{.75\linewidth}{!}{
        \begin{tikzpicture}[]
            \graph [simple, grow right=2cm, math nodes] {
            {1,2,3,4} ->[complete bipartite] {"\overline{1}", "\overline{2}", "\overline{3}"};
            };
        \end{tikzpicture}
        }
        \caption{$\graph{12534}$.}
        \label{fig:graph(12534)}
    \end{subfigure}
    \caption{The Rothe diagram, northwest diagram, $L$-diagram, and graph of the permutation $12534$.}
    \label{fig:12534}
\end{figure}

\begin{ex} \label{ex:54132-complexity}
    Consider the permutation $12534 \in \symgrp{5}$. 
    Using \Cref{fig:12534} and \eqref{eq:complexity_L} we see that the complexity of the $\TxT$-variety $\Y{12534}$ is
    \begin{align*}
        d_{12534}   &= \abs*{L(12534)} + \abs*{\dom(12534)} - \abs*{\rothe(12534)} - \abs*{\vertexset{\graph{12534}}} + \abs*{\comp{\graph{12534}}}\\
                    &= 12 + 0 - 2 - 7 + 1\\
                    &= 4.
    \end{align*}
\end{ex}

By definition, $\abs*{\edgeset{\graph{w}}}=\abs*{L(w)}$, so we can rewrite \eqref{eq:complexity_L} as
\begin{equation}\label{eq:complexity-cyclomatic}
    \begin{split}
    d_w&=\abs*{\edgeset{\graph{w}}} -\abs*{\vertexset{\graph{w}}} + \abs*{\comp{\graph{w}}} + \abs*{\dom(w)} - \abs*{\rothe(w)}\\
    &=\nu(\graph{w}) + \abs*{\dom(w)} - \abs*{\rothe(w)},
    \end{split}
\end{equation}
where $\nu(\graph{w}) = \abs*{\edgeset{\graph{w}}} -\abs*{\vertexset{\graph{w}}} + \abs*{\comp{\graph{w}}}$ is the cyclomatic number of the underlying undirected graph of $\graph{w}$.
The cyclomatic number $\nu(G)$ of a graph $G$ is the minimum number of edges needed to be removed from $G$ in order to make it acyclic.
The cyclomatic number $\nu(G)$ is also known as the first Betti number \cite{Whitney:1932aa}, nullity \cite{Whitney:1932aa}, or corank of $G$ \cite{Bollobas:1998aa}.

We will use the following lemma about $\nu(G)$ to prove \Cref{thm: max-complexity}.

\begin{lemma}\label{lem:cyclomatic number}
    The unique subgraph $G \subseteq K_{n-1,n-1}$ maximizing the cyclomatic number $\nu(G)$ is $K_{n-1,n-1}$.
\end{lemma}

\begin{proof}
    We will prove the statement algorithmically by taking an arbitrary nonempty subgraph of $K_{n-1,n-1}$ and performing operations on the graph that increase its cyclomatic number.
    Begin with a nonempty subgraph $G\subseteq K_{n-1,n-1}$.
    Next, add each $v \in K_{n-1,n-1}\sm G$ to $G$. 
    This operation preserves $\nu(G)$ since each new vertex increases $\abs*{\comp{G}}$ by $1$.
    For each isolated vertex in $G$, add an edge connecting it to another connected component of $G$.
    This again preserves the cyclomatic number $\nu(G)$ since each new edge reduces the number of connected components by $1$.
    Now we add edges to $G$ so that each connected component of $G$ is isomorphic to some complete bipartite graph $K_{a,b}$ with $a,b \geq 1$.
    This operation only increases $\nu(G)$ since the number of vertices and connected components remains constant.
    Finally, add all remaining edges between connected components $K_{a,b}$ so that $G$ becomes isomorphic to $K_{n-1,n-1}$.
    This increases $\nu(G)$ since reducing the number of connected components by $1$ corresponds to the addition of at least $2$ edges.
\end{proof}

\section{Main Results} \label{sec: Main Results}

In this section, we prove \Cref{thm: max-complexity} and \Cref{thm: all complexities}. Concretely, we determine the maximum complexity among all permutations in $\symgrp{n}$ and show that every integer value up to this maximum (excluding 1, see \cite[Theorem~3.14]{Donten-Bury:2023aa}) is the complexity of some $\TxT$-variety $\Y{w}$ with $w\in \symgrp{n}$.

We start by proving a theorem about 
\begin{equation*}
    \dmax(n)\coloneq \max\{d_{w} : w \in \symgrp{n}\}.
\end{equation*}

\begin{rmk}
    For every $w \in \symgrp{n}$ where $1\leq n\leq 3$, the associated $\TxT$-variety $Y_w$ has complexity $0$.
    For example, $Y_{132}$ is the toric variety defined by the ideal
    \begin{equation*}
        \left(\det
        \begin{pmatrix}
            z_{11} & z_{12}\\
            z_{21} & z_{22}
        \end{pmatrix}\right)
        =(z_{11}z_{22}-z_{12}z_{21})\subset \C[z_{11},z_{12},z_{21},z_{22}].
    \end{equation*}
    All other $Y_w$ where $w\in\symgrp{n}$ with $1\leq n\leq 3$ are points.    
    Thus, $\dmax(1)=\dmax(2)=\dmax(3)=0$.
\end{rmk}

\begin{thm} \label{thm: max-complexity}
    For $n \geq 4$, $\dmax(n)=(n - 1)(n - 3)$ and $s_{n-1} = \perm{1,\dotsc,n-2,n,n-1}$ is the unique permutation in $\symgrp{n}$ whose associated $\TxT$-variety achieves this complexity.
\end{thm}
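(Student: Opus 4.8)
The plan is to reinterpret the complexity formula \eqref{eq: complexity L'(w)} as a graph invariant of $\graph{w}$ and then optimize over $\symgrp{n}$. Write $p$ and $q$ for the number of nonempty rows and columns of $L(w)$, so that $\abs{\vertexset{\graph{w}}} = p + q$ and $\abs{\edgeset{\graph{w}}} = \abs{L(w)}$. Substituting $\abs{L'(w)} = \abs{L(w)} - \abs{\opprothe(w)\sm\dom(w)}$ into \eqref{eq: complexity L'(w)} gives the key identity
\[
d_{w} = \bigl(\abs{\edgeset{\graph{w}}} - \abs{\vertexset{\graph{w}}} + \abs{\comp{\graph{w}}}\bigr) - \abs{\opprothe(w)\sm\dom(w)},
\]
in which the bracketed quantity is the cyclomatic number (first Betti number) $b_{1}(\graph{w})$, a nonnegative integer. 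In particular $d_{w} \le b_{1}(\graph{w})$, and the whole problem reduces to bounding $b_{1}(\graph{w})$ and then controlling the correction term $\abs{\opprothe(w)\sm\dom(w)}$.

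Next I would bound $b_{1}(\graph{w})$. By \Cref{rmk:1jin}, $L(w)$ is contained in the $(n-1)\times(n-1)$ block with rows in $\{2,\dots,n\}$ and columns in $\{1,\dots,n-1\}$, so $p,q \le n-1$. Decomposing $\graph{w}$ into connected components, the $i$th component is bipartite with $p_{i}$ and $q_{i}$ vertices on its two sides and at most $p_{i}q_{i}$ edges, so its Betti number is at most $(p_{i}-1)(q_{i}-1)$. A short superadditivity estimate gives $\sum_{i}(p_{i}-1)(q_{i}-1) \le (p-1)(q-1)$, with the gap strictly positive once there are two or more components. Hence $b_{1}(\graph{w}) \le (p-1)(q-1) \le (n-2)^{2}$, where equality throughout forces $p=q=n-1$ and $\graph{w} = K_{n-1,n-1}$, equivalently $L(w)$ equal to the full $(n-1)\times(n-1)$ block.

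Since $(n-2)^{2} = (n-1)(n-3)+1$, the bound $d_{w}\le b_{1}(\graph{w})\le (n-2)^{2}$ overshoots by exactly one, so the crux is to recover the missing unit; I would split on whether $L(w)$ is the full block. If it is not, then $b_{1}(\graph{w}) \le (n-2)^{2}-1 = (n-1)(n-3)$ and $d_{w}\le b_{1}(\graph{w})$ finishes. If $L(w)$ is the full block, then $(n,1)\in L(w)$ while $\dom(w)\cap L(w)=\varnothing$ by definition of $L(w)=\sw(w)\sm\dom(w)$, forcing $(n,1)\notin\dom(w)$ and hence $\dom(w)=\varnothing$; meanwhile $\opprothe(w)\ne\varnothing$ because $L(w)$ is nonempty, so $\abs{\opprothe(w)\sm\dom(w)}=\abs{\opprothe(w)}\ge 1$ and again $d_{w}\le (n-2)^{2}-1=(n-1)(n-3)$. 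This proves $\dmax(n)\le (n-1)(n-3)$. Attainment is a direct computation: for $w_{0}s_{n-1}=\perm{n,n-1,\dots,3,1,2}$ one finds $\opprothe(w)=\{(2,n-1)\}$, $\dom(w)=\varnothing$, $L(w)$ the full block and $\graph{w}=K_{n-1,n-1}$, so $d_{w_{0}s_{n-1}} = (n-2)^{2}-1 = (n-1)(n-3)$.

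For uniqueness I would trace back the equality conditions, which split into the same two cases. In the full-block case equality additionally forces $\abs{\opprothe(w)}=1$; since $\sw(w)=L(w)$ is the full block, its SW-closure must reach the extreme corner $(2,n-1)$, and by \Cref{rmk:1jin} the only essential cell that can do this is $(2,n-1)$ itself, so $\opprothe(w)=\{(2,n-1)\}$. Decoding this single noninversion via \eqref{eqn: opprothe-ninv} gives $w(n)=2$, $w(n-1)=1$, and then $w(1),\dots,w(n-2)$ must be strictly decreasing (any ascent would create a second noninversion), i.e.\ $w=\perm{n,n-1,\dots,3,1,2}=w_{0}s_{n-1}$. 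In the non-full-block case equality needs $b_{1}(\graph{w})=(n-1)(n-3)$ together with $\abs{\opprothe(w)\sm\dom(w)}=0$; but the latter means $\opprothe(w)=\dom(w)$ is a single dominant component, forcing $\sw(w)=\opprothe(w)$ and hence $L(w)=\varnothing$, so $b_{1}(\graph{w})=0\ne (n-1)(n-3)$ for $n\ge 4$, a contradiction. Thus the maximum is realized only at $w_{0}s_{n-1}$. The main obstacle is exactly this off-by-one bookkeeping: the crude Betti-number bound yields $(n-2)^{2}$, and extracting the extra $-1$ — and with it uniqueness — relies on the rigidity of which skew shapes occur as $L(w)$, in particular that forcing $\graph{w}=K_{n-1,n-1}$ simultaneously forces $\opprothe(w)$ to be nonempty.
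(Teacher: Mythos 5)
Your proof is correct, and its skeleton coincides with the paper's: both reduce the problem to the bipartite graph $\graph{w}$, bound edges-minus-vertices by making each component complete bipartite, use \Cref{rmk:1jin} to cap the number of occupied rows and columns at $n-1$, and identify $K_{n-1,n-1}$ with $\opprothe(w)=\{(2,n-1)\}$ as the unique extremal configuration. The differences are in the bookkeeping, and yours is arguably cleaner at two points. First, where the paper bounds $\sum_i(r_i-1)(c_i-1)\le(n-1-k)^2$ by Cauchy--Schwarz and then observes the bound is largest at $k=1$, you use the elementary superadditivity $\sum_i(p_i-1)(q_i-1)\le\bigl(\sum_i(p_i-1)\bigr)\bigl(\sum_i(q_i-1)\bigr)\le(p-1)(q-1)\le(n-2)^2$. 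Second, the paper extracts the crucial extra $-1$ uniformly from the correction term $\abs{\opprothe(w)}-\abs{\dom(w)}\ge 1$ (after disposing of the case $L(w)=\varnothing$ separately), whereas you split on whether $L(w)$ is the full $(n-1)\times(n-1)$ block and take the $-1$ from whichever side has slack; and for uniqueness you trace equality through your chain of inequalities, while the paper runs a separate optimization over the number $t$ of missing vertices with a first-derivative test. Both routes land on the same rigidity statement --- $\graph{u}=K_{n-1,n-1}$ forces $\dom(u)=\varnothing$ and $\opprothe(u)=\{(2,n-1)\}$, hence $u=w_0s_{n-1}$ --- and your decoding of the single noninversion via \eqref{eqn: opprothe-ninv} matches the paper's. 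One small point worth spelling out in your non-full-block uniqueness case: the claim that $\opprothe(w)=\dom(w)$ forces $\sw(w)=\dom(w)$ (and hence $L(w)=\varnothing$) uses that the dominant piece is a bottom-left-justified Young diagram, so the southwest closure of its own northeast corners is itself; this is true and is exactly the paper's observation that $\abs{\dom(w)}=\abs{\opprothe(w)}$ implies $L(w)=\varnothing$, but it deserves a sentence.
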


\begin{proof}
    Note that 
    $\abs*{\dom(w)} - \abs*{\rothe(w)} \leq 0$ since $\dom(w) \subseteq \rothe(w)$. 
    Moreover, $\abs*{\dom(w)} - \abs*{\rothe(w)} = 0$ if and only if 
    $\abs*{L(w)} = 0$. When $\abs*{L(w)} = 0$ we have that $L(w) = \varnothing$ and $\Y{w}$ is the origin. Since the complexity of a point is $0$, it follows that if $w$ is such that 
    $\abs*{\dom(w)} - \abs*{\rothe(w)} = 0$, then the complexity of the $\TxT$-variety $\Y{w}$ is $d = 0$.  
    
    Let us now assume that $\abs*{\dom(w)} - \abs*{\rothe(w)} < 0$, $\abs*{\vertexset{\graph{w}}}\geq 2$, and $\abs*{\comp{\graph{w}}}\geq 1$.
    Using \eqref{eq:complexity-cyclomatic}, we have that the complexity $d$ of $\Y{w}$ is bounded by 
    \begin{equation}
        d \leq \nu(\graph{w}) - 1.
    \end{equation}

    By \Cref{lem:cyclomatic number}, we know that $\nu(\graph{w})$ is uniquely maximized when $\graph{w}\simeq K_{n-1,n-1}$.
    
    It remains to show that for $n\geq 4$, the unique $w\in\symgrp{n}$ such that $\graph{w}\simeq K_{n-1,n-1}$ and $\abs*{\dom(w)} - \abs*{\rothe(w)}=-1$ is $s_{n-1}$.
    Let $w \in \symgrp{n}$ such that $\graph{w}\simeq K_{n-1,n-1}$ and $\abs*{\dom(w)} - \abs*{\rothe(w)}=-1$.
    Since $\edge{1}{\overline{1}} \in \edgeset{\graph{w}}$, we know that $(1,1)\in L(w)$ and thus $\dom(w)=\varnothing$.
    It follows that $\abs*{\rothe(w)} = 1$.
    This means that $w$ has exactly one inversion.
    The only permutations $w \in \symgrp{n}$ with $\dom(w)=\varnothing$ and exactly one inversion are of the form $s_i$ where $1<i\leq n-1$.
    Among these permutations, only $s_{n-1}$ has an associated graph isomorphic to $K_{n-1,n-1}$.

    Observe that 
    \begin{align*}
        \nu(K_{n-1,n-1}) &= \abs*{\edgeset{K_{n-1,n-1}}} - \abs*{\vertexset{K_{n-1,n-1}}} + \abs*{\comp{K_{n-1,n-1}}}\\
        &= (n-1)^{2} - 2(n-1) + 1.
    \end{align*}
    Hence, $\dmax(n)=\nu(K_{n-1,n-1}) - 1 = (n-1)(n-3)$ and is uniquely achieved by the $\TxT$-variety associated to the permutation $s_{n-1}$.
\end{proof}

\begin{figure}[ht]
    \begin{subfigure}{.22\textwidth}
    \centering
    \resizebox{\linewidth}{!}{
    \begin{tikzpicture}[baseline=(O.base), scale = 1.3]
    \node(O) at (1,1) {};
    \fill[cyan!20]
        (1.5, .5) rectangle (2, 1);
    \draw[step=.5] (0,0) grid (2.5,2.5);
    \end{tikzpicture}
    }
    \caption{$\rothe(12354)$}
    \end{subfigure}\hfill
    \begin{subfigure}{.22\textwidth}
    \centering
    \resizebox{\linewidth}{!}{
    \begin{tikzpicture}[baseline=(O.base),scale=1.3]
        \node(O) at (1,1) {};
        \fill[orange!20]
            (0,2.5) rectangle (2,.5);
        \draw[step=.5] (0,0) grid (2.5,2.5);
    \end{tikzpicture}
    }
    \caption{$\nw(12354)$}
    \end{subfigure}\hfill
    \begin{subfigure}{.22\textwidth}
    \centering
    \resizebox{\linewidth}{!}{
    \begin{tikzpicture}[baseline=(O.base),scale=1.3]
        \node(O) at (1,1) {};
        \fill[cyan!20]
            (0,2.5) rectangle (2,.5);
        \draw[step=.5] (0,0) grid (2.5,2.5);
    \end{tikzpicture}
    }
    \caption{$L(12354)$}
    \end{subfigure}\hfill
    \begin{subfigure}{.22\textwidth}
        \centering
        \resizebox{.75\linewidth}{!}{
        \begin{tikzpicture}[]
            \graph [simple, grow right=2cm, math nodes] {
            {1,2,3,4} ->[complete bipartite] {"\overline{1}", "\overline{2}", "\overline{3}", "\overline{4}"};
            };
        \end{tikzpicture}
        }
        \caption{$\graph{12354}$.}
        \label{fig:graph(12354)}
    \end{subfigure}
    \caption{The Rothe diagram, northwest diagram, $L$-diagram, and graph of the permutation $12354$.}
    \label{fig:12354}
\end{figure}

\begin{ex}
    Consider the permutation $s_{n-1}\in\symgrp{5}$.
    \Cref{fig:12354} illustrates that $\graph{12354}\simeq K_{4,4}$ and $\abs*{\dom(12354)} - \abs*{\rothe(12354)}=-1$.
    The $\TxT$-variety associated to the permutation $s_{4}=12354$ achieves complexity $\dmax(5)=8$.
\end{ex}

Our last goal is to determine the integers that can appear as $d_w$ for some $w\in\symgrp{n}$, where $n$ is fixed. 
To do so, we will consider permutations whose Rothe diagram is a single box on the main diagonal and change them into permutations with smaller complexity. The following remark explains which permutations yield a Rothe diagram with this property and shows that the complexity associated with such permutations is equal to the maximum complexity for a smaller $n$.

\begin{rmk}\label{rmk: w0si}
    For $i \in [n-1]$, the permutation $s_i\in\symgrp{n}$ has Rothe diagram $\rothe(s_i)=\{(i,i)\}$
    In addition, for $i\geq 2$, the $\TxT$-variety $Y_{s_i}$ has complexity $d_{s_i} = i(i-2)=\dmax(i+1)$.
\end{rmk}

The following lemma describes how the complexity changes when boxes in a specific region of $[n]^2$ are added to the Rothe diagram of a permutation.
The proof of \cite[Theorem 3.15]{Donten-Bury:2023aa} discusses a similar problem. 
Concretely, given $u\in \symgrp{n}$ and $v\in \symgrp{k}$ it describes the complexity of the $\TxT$-variety $Y_{u\times w}$, where $u\times v\in \symgrp{n+k}$ is the image of $(u,v)$ under the embedding $\symgrp{n}\times \symgrp{k}\hookrightarrow \symgrp{n+k}$,
in terms of the complexities of $Y_u$ and $Y_v$.
Given a permutation $\alpha\in \symgrp{n}$, the following lemma instead deals with the situation of replacing a region in $[n]^2$ containing no elements of $\rothe(\alpha)$ with the diagram of a permutation $\beta\in \symgrp{m}$ with $m<n$.

\begin{lemma}\label{lem: antidiagonal diagrams}
    Let $\alpha$ be a permutation in $\symgrp{n}$ with associated $\TxT$-variety $\Y{\alpha}$ of complexity $d_{\alpha}$ such that $\rothe(\alpha)$ is nonempty and contained in the southeastern-most $k\times k$ submatrix. Let $m = n - k$ and let $\beta \in \symgrp{m}$. Then, the $\TxT$-variety $Y_w$ associated to the permutation 
    $w=\perm{\beta_1,\dotsc,\beta_m,\alpha_{m+1},\dotsc,\alpha_n}$ has complexity $d_\alpha -\abs*{\rothe(\beta)}$.
\end{lemma}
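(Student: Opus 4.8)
The plan is to compare $\Y{w}$ with $\Y{\alpha}$ through the relation $d=\dim(\Y{})-\dim(\sigma)$, showing that the passage from $\alpha$ to $w$ lowers $\dim(\Y{})$ by exactly $\abs{\opprothe(\beta)}$ while preserving $\dim(\sigma)$. First I would unpack the hypothesis. By \eqref{eqn: opprothe-ninv}, the containment $\opprothe(\alpha)\subseteq\{1,\dots,k\}\times\{m+1,\dots,n\}$ forces every noninversion $(a,b)$ of $\alpha$ to satisfy $a\ge m+1$, which in turn forces $\alpha(a)=n-a+1$ for all $a\le m$. Hence $\alpha=\perm{n,n-1,\dots,k+1,\alpha_{m+1},\dots,\alpha_n}$ with $\{\alpha_{m+1},\dots,\alpha_n\}=\{1,\dots,k\}$, and in particular $\dom(\alpha)=\varnothing$. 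Since $w$ has the same tail but head $\perm{\beta_1+k,\dots,\beta_m+k}$, its permutation matrix is block anti-diagonal: the nonzero entries lie in the southwest block (rows $k+1,\dots,n$, columns $1,\dots,m$), recording $\beta$, and in the northeast block (rows $1,\dots,k$, columns $m+1,\dots,n$), recording the common tail.

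Next I would compute $\opprothe(w)$ quadrant by quadrant from \eqref{eq:1jin}. In the northwest quadrant $w(j)\ge k+1>i$ and in the southeast quadrant $w^{-1}(i)\le m<j$, so no boxes occur; in the northeast quadrant the two conditions depend only on the tail and reproduce $\opprothe(\alpha)$, while in the southwest quadrant they depend only on $\beta$ and reproduce a copy of $\opprothe(\beta)$ shifted down by $k$. As the two blocks share no row or column, these boxes fall into disjoint connected components, giving $\ess(w)=\ess(\alpha)\sqcup(\text{shift of }\ess(\beta))$ and $\abs{\opprothe(w)}=\abs{\opprothe(\alpha)}+\abs{\opprothe(\beta)}$. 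Because $\opprothe(\alpha)$ is nonempty and sits in the northeast block, each essential box of $\alpha$ lies weakly northeast of the entire southwest block, so that block is contained in $\sw(\alpha)$; the southwest regions contributed by the shifted $\ess(\beta)$ then add nothing, and $\sw(w)=\sw(\alpha)$. Combined with \eqref{eq:dim(Y_w)}, these give $\dim(\Y{w})=\abs{\sw(w)}-\abs{\opprothe(w)}=\dim(\Y{\alpha})-\abs{\opprothe(\beta)}$.

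It remains to prove $\dim(\sigma_w)=\dim(\sigma_\alpha)$, which is the crux. Since $\dom(\alpha)=\varnothing$ we have $L(\alpha)=\sw(\alpha)=\sw(w)$, so $L(w)=L(\alpha)\sm\dom(w)$ and $\sigma_w\subseteq\sigma_\alpha$; I only need these cones to span the same subspace. The decisive point is that $\dom(w)$ is a shifted component of $\opprothe(\beta)$, so by \Cref{rmk:1jin} applied to $\beta\in\symgrp{m}$ it meets neither the top row nor the last column of $\beta$'s grid; after the shift this means every $(a,b)\in\dom(w)$ has $a\ge k+2$ and $b\le m-1$. Consequently the boxes $(k+1,b)$, $(a,m)$ and $(k+1,m)$ all lie in the southwest block and avoid $\dom(w)$, hence belong to $L(w)$, and the identity $e_a-f_b=(e_a-f_m)-(e_{k+1}-f_m)+(e_{k+1}-f_b)$ expresses the weight of $(a,b)$ in terms of weights of $L(w)$. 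Thus $\sigma_w$ and $\sigma_\alpha$ span the same space, so $\dim(\sigma_w)=\dim(\sigma_\alpha)$, and \eqref{eq:dim(sigma)} yields $d_w=\dim(\Y{w})-\dim(\sigma_w)=d_\alpha-\abs{\opprothe(\beta)}$.

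The main obstacle is precisely this last dimension count: one must be sure that excising the dominant piece does not collapse the weight cone. Everything rests on the boundary-avoidance property of \Cref{rmk:1jin}, which keeps the top row and right column of the southwest block inside $L(w)$ and thereby preserves all the weights needed to recover the deleted ones. I would also record the degenerate case $\dom(w)=\varnothing$ (equivalently $\beta(1)=m$) separately, where $L(w)=L(\alpha)$ and the equality of cone dimensions is immediate.
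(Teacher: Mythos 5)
Your proof is correct. The first half---computing $\opprothe(w)$ block by block, showing $\sw(w)=\sw(\alpha)$ and $\abs*{\opprothe(w)}=\abs*{\opprothe(\alpha)}+\abs*{\opprothe(\beta)}$, and deducing $\dim(\Y{w})=\dim(\Y{\alpha})-\abs*{\opprothe(\beta)}$---matches the paper's argument. Where you genuinely diverge is the cone-dimension step. The paper routes it through the bipartite graph $\graph{w}$: it exhibits boxes in row $k$ and column $m+1$ (which lie just outside the southwest $m\times m$ block and hence trivially avoid $\dom(w)$) to conclude that $L(w)$ has the same nonempty rows and columns as $L(\alpha)$ and that $\graph{w}$ remains connected, and then invokes \Cref{lem:dim(edgecone)} through the complexity formula \eqref{eq: complexity L'(w)}. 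You instead argue directly on the weight cone: each deleted weight $e_a-f_b$ with $(a,b)\in\dom(w)$ is an integer combination of the weights at $(a,m)$, $(k+1,m)$, and $(k+1,b)$, and these boxes survive in $L(w)$ because \Cref{rmk:1jin} applied to $\beta$ keeps $\dom(w)$ away from row $k+1$ and column $m$ of the southwest block; hence $\sigma_w$ and $\sigma_\alpha$ have the same linear span and $\dim(\sigma_w)=\dim(\sigma_\alpha)$, after which \eqref{eq:dim(sigma)} finishes the computation. The two arguments are morally equivalent---\Cref{lem:dim(edgecone)} is itself a packaged version of exactly this kind of spanning computation---but yours is self-contained and makes explicit \emph{why} excising the dominant piece cannot collapse the cone, whereas the paper's is shorter because the graph lemma absorbs the linear algebra. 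Your separate treatment of the degenerate case $\dom(w)=\varnothing$ is a point of care the paper leaves implicit.
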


\begin{proof}
    First, note that if $\rothe(\alpha)$ is contained in the southeastern-most $k\times k$ submatrix, then $\alpha_i=i$ for all $i \leq m$.
    Then by construction, the Rothe diagram of $w$ is as in \Cref{fig: alpha beta diagram} where the area labeled $\varnothing$ has no boxes. 

\begin{figure}[ht]
    \centering
    \usetikzlibrary {decorations.pathmorphing}
    \begin{tikzpicture}[scale = 0.08cm]
    \draw[step=0.5cm,color=black] (0,0) grid (1,1);
    \node at (+0.25,+0.75) {$\rothe(\beta)$};
    \node at (.25,-.1) {$m$};
    \node at (1.1,.25) {$k$};
    \node at (+0.75,+0.75) {$\varnothing$};
    \draw[decoration={brace,raise=1pt},decorate]
        (0,0) -- node[left=1pt] {$n$} (0,1);
    \node at (.75, -.1) {$k$};
    \node at (1.1,.75) {$m$};
    \node at (+0.25,+0.25) {$\varnothing$};
    \node at (+0.75,+0.25) {$\rothe(\alpha)$};
    \end{tikzpicture}
    \caption{The Rothe diagram of $w = [\beta_{1}, \dotsc, \beta_{m}, \alpha_{m + 1}, \dotsc, \alpha_{n}]$.}
    \label{fig: alpha beta diagram}
\end{figure}

Since the boxes in $\rothe(\alpha)$ are southeast of all boxes in the area labeled $\rothe(\beta)$, we know that $\nw(w)=\nw(\alpha)$.
Moreover, because $\rothe(w)$ is the union of $\rothe(\alpha)$ and $\rothe(\beta)$, we know that $\abs*{\rothe(w)} = \abs*{\rothe(\alpha)} + \abs*{\rothe(\beta)}$.
 
Note that $L(w) = L(\alpha) \sm \dom(\beta)$. 
Since $\rothe(\alpha)\neq\varnothing$ and $\dom(\beta)$ is contained in the northwestern-most $m\times m$ submatrix, we have that $L(w)$ contains the set $\{(1,m+1),\dotsc,(m+1,m+1),\dotsc,(m+1,1)\}$.
It follows that the number of nonempty rows and nonempty columns in $L(w)$ equals that of $L(\alpha)$. Therefore, $\abs*{\vertexset{\graph{w}}} = \abs*{\vertexset{\graph{\alpha}}}$. 

Since $\dom(\alpha) = \varnothing$ and $\rothe(\alpha) \neq \varnothing$ we know that $\graph{\alpha}$ has one connected component. 
In addition, since $\{(1,m+1),\dotsc,(m+1,m+1),\dotsc,(m+1,1)\}\in L(w)$, we know that $\graph{w}$ also has one connected component.
Using \eqref{eq: complexity L'(w)} in combination with the fact that $L'(w)=\nw(w)\sm\rothe(w)$, we have that the complexity of $\Y{w}$ is given by
\begin{equation}
    \begin{aligned}
        d_{w} &= \abs*{\nw(w)} - \abs*{\rothe(w)} - \abs*{\vertexset{\graph{w}}} 
        + \abs*{\comp{\graph{w}}}\\
      &= \abs*{\nw(\alpha)} - (\abs*{\rothe(\alpha)} + \abs*{\rothe(\beta)}) 
        - \abs*{\vertexset{\graph{\alpha}}} + \abs*{\comp{\graph{\alpha}}}\\
      &= d_{\alpha} - \abs*{\rothe(\beta)}.
    \end{aligned}
\end{equation}
\end{proof}

We are now ready to complete the proof of our main result.

\begin{thm} \label{thm: all complexities}
    Fix $n \geq 4$. For any $d \in \{0, 2, 3, \dotsc, (n-1)(n-3)\}$ there exists $w\in \symgrp{n}$ such that $\Y{w}$ has complexity $d$.
\end{thm}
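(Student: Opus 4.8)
The idea is to sweep out all the required complexities using the permutations $w_{0}s_{i}$ as ``tops'' and to descend from them in unit steps via \Cref{lem: antidiagonal diagrams}, and then to patch the single unavoidable gap by hand. First I would fix $i \in \{2,\dots,n-1\}$ and set $\alpha = w_{0}s_{i} \in \symgrp{n}$. By the remark immediately preceding \Cref{lem: antidiagonal diagrams}, $\opprothe(\alpha) = \{(n+1-i,\,i)\}$ is a single box, $\dom(\alpha)=\varnothing$, and $\Y{\alpha}$ has complexity $i(i-2)$; this box sits in the northeastern-most $k\times k$ submatrix with $k=n+1-i$, so that $m=n-k=i-1$. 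Applying \Cref{lem: antidiagonal diagrams} with an arbitrary $\beta \in \symgrp{i-1}$ then yields a permutation whose $T$-variety has complexity $i(i-2) - \abs{\opprothe(\beta)}$. Since $\abs{\opprothe(\beta)}$ equals the number of noninversions of $\beta$, and this statistic takes every value in $\{0,1,\dots,\binom{i-1}{2}\}$ as $\beta$ ranges over $\symgrp{i-1}$, the complexities obtained for this fixed $i$ are exactly the integers in
\[
    I_{i} \coloneq \left[\tfrac{(i-2)(i+1)}{2},\ i(i-2)\right].
\]

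Next I would analyze the union $\bigcup_{i=2}^{n-1} I_{i}$. Writing $a_{i}=\tfrac{(i-2)(i+1)}{2}$ and $b_{i}=i(i-2)$ for the endpoints of $I_{i}$, a short computation shows that $a_{i+1}\le b_{i}+1$ (i.e. consecutive intervals abut or overlap) exactly when $(i-1)(i-4)\ge 0$, that is, for $i\ge 4$. Hence $I_{4},\dots,I_{n-1}$ chain together to cover every integer in $[5,(n-1)(n-3)]$, while $I_{2}=\{0\}$ and $I_{3}=\{2,3\}$ remain isolated. Thus this family realizes precisely the complexities $\{0\}\cup\{2,3\}\cup\{5,6,\dots,(n-1)(n-3)\}$. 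For $n=4$ only $I_{2}$ and $I_{3}$ occur, and these already give $\{0,2,3\}=\{0,2,3,\dots,(n-1)(n-3)\}$, completing that case; for $n\ge 5$ the unique value of $\{0,2,3,\dots,(n-1)(n-3)\}$ still missing is $d=4$.

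It remains to produce, for every $n\ge 5$, a single permutation of complexity $4$. I would take
\[
    w^{(n)} \coloneq \perm{n,\,n-1,\,n-4,\,n-2,\,n-3,\,n-5,\,n-6,\,\dots,\,2,\,1},
\]
the decreasing word on $[n]$ with the values $n-4,n-2,n-3$ placed in positions $3,4,5$; for $n=5$ this is the permutation $54132$ of \Cref{ex:54132-complexity}. Its only noninversions are the pairs $(3,4)$ and $(3,5)$, so $\opprothe(w^{(n)})=\{(n-3,3),(n-2,3)\}$, giving $\ess(w^{(n)})=\{(n-3,3)\}$, $\dom(w^{(n)})=\varnothing$, and $\sw(w^{(n)})$ equal to the $4\times 3$ rectangle on rows $n-3,\dots,n$ and columns $1,2,3$. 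Consequently $\graph{w^{(n)}}$ is the complete bipartite graph on these $4+3$ vertices, and \eqref{eq:complexity_L} gives complexity $\abs{L(w^{(n)})}+\abs{\dom(w^{(n)})}-\abs{\opprothe(w^{(n)})}-\abs{\vertexset{\graph{w^{(n)}}}}+\abs{\comp{\graph{w^{(n)}}}}=12+0-2-7+1=4$. Together with the previous paragraph this covers all of $\{0,2,3,\dots,(n-1)(n-3)\}$.

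The crux of the argument is the gap at $d=4$. Because the reduction in \Cref{lem: antidiagonal diagrams} leaves $\sw(\alpha)$ unchanged, starting from a single-box $\alpha=w_{0}s_{i}$ one only ever encounters square $i\times i$ southwest diagrams and complexities of the shape $(i-1)^{2}-p$; the resulting blocks $\{2,3\}$ (from $i=3$) and $\{5,6,7,8\}$ (from $i=4$) cannot be made to touch, and no single-box starting point yields $4$. Escaping this forces a genuinely non-square southwest diagram, which is exactly the role played by the $4\times 3$ rectangle behind $w^{(n)}$. Everything else — the interval bookkeeping and the noninversion count for $w^{(n)}$ — is routine.
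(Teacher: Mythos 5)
Your proposal is correct and follows essentially the same route as the paper: descend from the single-box permutations $w_{0}s_{i}$ via \Cref{lem: antidiagonal diagrams}, check that the resulting intervals of complexities chain together from $5$ up to $(n-1)(n-3)$ while $\{0\}$ and $\{2,3\}$ come from $i=2,3$, and patch the one remaining gap $d=4$ with the very same permutation $\perm{n,n-1,n-4,n-2,n-3,n-5,\dotsc,1}$ (the shifted $54132$) that the paper uses. The only differences are cosmetic (indexing by $i$ rather than $k=i+1$, and making the interval bookkeeping slightly more explicit).
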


\begin{proof}
     From \Cref{thm: max-complexity} we know that the $\TxT$-variety associated to $s_{n-1}\in\symgrp{n}$ has complexity $\dmax(n)=(n-1)(n-3)$.
     Moreover, $\rothe(s_{n-1}) = \{(n-1,n-1)\}$ is contained in the southeasternmost $2\times 2$ submatrix.
     By \Cref{lem: antidiagonal diagrams}, for any permutation $\beta \in\symgrp{n-2}$ there exists a permutation $w\in\symgrp{n}$ whose associated $\TxT$-variety has complexity $\dmax(n)-\abs*{\rothe(\beta)}$.
     Since $\abs*{\rothe(\beta)}$ is the number of inversions of $\beta$, we know that $\abs*{\rothe(\beta)}\leq \frac{(n-2)(n-3)}{2}$.
     Therefore, we can achieve any complexity between $\dmax(n)$ and $\dmax(n)-\frac{(n-2)(n-3)}{2}$.

     Define $f(k)$ to be the difference between $\dmax(k)$ and the maximum number of inversions of a permutation $\beta\in\symgrp{k-2}$.
     Namely,
     \begin{equation}
         f(k)=\dmax(k)-\frac{(k-2)(k-3)}{2}=\frac{k(k-3)}{2},
     \end{equation}
     for $3\leq k\leq n$.
     Recall from \Cref{rmk: w0si} that the $\TxT$-variety associated to the permutation $s_{k-1}\in\symgrp{n}$ has complexity $\dmax(k)$.
     Therefore, every integer in $\bigcup_{k=3}^{n}[f(k),\dmax(k)]$ is the complexity of some $\TxT$-variety $Y_w$ with $w\in \symgrp{n}$.
     In \Cref{fig:intervals}, we illustrate the intervals $[f(k),\dmax(k)]$ for $3\leq k\leq 8$.
     
     \begin{figure}[h]
    \centering
    \begin{tikzpicture}[baseline=(O.base), scale = .35]
        \draw (-1,0)-- (36,0);
        \foreach \x in {0,1,2,3,4,5,8,9,14,15,20,24,35} {
            \draw (\x,0.25) -- (\x,-0.25) node[below] {$\x$};
            }
        \draw[fill] (0,1) circle (.125) node[right] {$f(3)=\dmax(3)$};
        \draw[fill=white] (1,2.5) circle (.125) node[right, font={\footnotesize}] {nonexistent};
        \draw[thick] [Bar-Bar] (2,4) node[left] {$f(4)$} -- (3,4) node[right] {$\dmax(4)$};
        \draw[fill] (4,5.5) circle (.125) node[right]  {$d_{12534678}$};
        \draw[thick] [Bar-Bar] (5,7) node[left] {$f(5)$} -- (8,7) node[right] {$\dmax(5)$};
        \draw[thick] [Bar-Bar] (9,8.5) node[left] {$f(6)$} -- (15,8.5) node[right] {$\dmax(6)$};
        \draw[thick] [Bar-Bar] (14,10) node[left] {$f(7)$} -- (24,10) node[right] {$\dmax(7)$};
        \draw[thick] [Bar-Bar] (20,11.5) node[left] {$f(8)$} -- (35,11.5) node[right] {$\dmax(8)$};
    \end{tikzpicture}
    \caption{The set of intervals $[f(k), \dmax(k)]$ for $3\leq k \leq 8$ and the permutation $w\in\symgrp{8}$ associated to a complexity-$4$ $\TxT$-variety.}
    \label{fig:intervals}
\end{figure}

     Note that $\dmax(3)=0$ and $\dmax(n)=(n-1)(n-3)$. 
     Thus if $\dmax(k-1)\geq f(k)-1$ for every $4 \leq k \leq n$, then we are done.
     In fact, this inequality holds for $6 \leq k \leq n$.
     It follows that $\dmax(k-1) < f(k)-1$ at $k=4$ and $k=5$.

     For $k=4$, we have that $\dmax(3)=0$ and $f(4)=2$.
     In principle, we could be missing a $\TxT$-variety of complexity $1$.
     However, \cite[Theorem~3.14]{Donten-Bury:2023aa} shows that there are no such varieties.

     For $k=5$, we have that $\dmax(4)=3$ and $f(5)=5$.
     Thus, the argument outlined above does not capture a $\TxT$-variety of complexity $4$.
     However, in \Cref{ex:54132-complexity}, we verified that $Y_{12534}\subset \C^{5\times 5}$ is a complexity-$4$ $\TxT$-variety.
     Therefore, for $n\geq 6$, the $\TxT$-variety associated to the image of $12534$ under the standard embedding of $\symgrp{5}$ into $\symgrp{n}$ has complexity $4$.
     See \Cref{fig:12534678-diagrams} for the case when $n=8$.
\end{proof}

\begin{figure}[ht]
    \begin{subfigure}{.22\textwidth}
    \centering
    \scalebox{1}[-1]{
    \resizebox{\linewidth}{!}{
    \begin{tikzpicture}[baseline=(O.base), scale = 1.3]
    \node(O) at (1,1) {};
    \fill[cyan!20]
	   (1, 1) rectangle (1.5, 2);
    \node at (.25,.25) {$\bullet$};
    \node at (.75,.75) {$\bullet$};
    \node at (1.25,2.25) {$\bullet$};
    \node at (1.75,1.25) {$\bullet$};
    \node at (2.25,1.75) {$\bullet$};
    \node at (2.75,2.75) {$\bullet$};
    \node at (3.25,3.25) {$\bullet$};
    \node at (3.75,3.75) {$\bullet$};
    \draw[step=.5] (0,0) grid (4,4);
    \end{tikzpicture}
    }
    }
    \caption{$\rothe(12534678)$.}
    \label{fig:rothe(12534678)}
    \end{subfigure}
    \hfill
    \begin{subfigure}{.22\textwidth}
    \centering
    \scalebox{1}[-1]{
    \resizebox{\linewidth}{!}{
    \begin{tikzpicture}[baseline=(O.base), scale=1.3]
    \node(O) at (1,1) {};
    \fill[orange!20]
        (0,0) rectangle (1.5,2);
    \draw[step=.5] (0,0) grid (4,4);
    \end{tikzpicture}
    }
    }
    \caption{$\nw(12534678)$.}
    \label{fig:nw(12534678)}
    \end{subfigure}
    \hfill
    \begin{subfigure}{.22\textwidth}
    \centering
    \scalebox{1}[-1]{
    \resizebox{\linewidth}{!}{
    \begin{tikzpicture}[baseline=(O.base), scale=1.3]
    \node(O) at (1,1) {};
    \fill[cyan!20]
        (0,0) rectangle (1.5,2);
    \draw[step=.5] (0,0) grid (4,4);
    \end{tikzpicture}
    }
    }
    \caption{$L(12534678)$.}
    \label{fig:L(12534678)}
    \end{subfigure}
    \hfill
    \begin{subfigure}{.22\textwidth}
        \centering
        \resizebox{.75\linewidth}{!}{
        \begin{tikzpicture}[]
            \graph [simple, grow right=2cm, math nodes] {
            {1,2,3,4} ->[complete bipartite] {"\overline{1}", "\overline{2}", "\overline{3}"};
            };
        \end{tikzpicture}
        }
        \caption{$\graph{12534678}$.}
        \label{fig:graph(12534678)}
    \end{subfigure}
\caption{The Rothe diagram, northwest diagram, $L$-diagram, and graph of the permutation $12534678\in\symgrp{8}$.}
\label{fig:12534678-diagrams}
\end{figure}

\section*{Acknowledgments}
LE and CM were partially funded by NSF CAREER grant DMS-2142656, DMS-2521270. We thank Akihiro Higashitani and John Shareshian for useful conversations. We also thank Sanah Suri for helping review code to compute examples.
We are very grateful to the anonymous referees for their constructive comments, all of which improved the manuscript.
\printbibliography
\end{document}